\gdef\SetFigFont#1#2#3#4#5{\reset@font\fontsize{#1}{#2pt}\fontfamily{#3}\fontseries{#4}\fontshape{#5}\selectfont}\fi
\numberwithin{figure}{section}
\newtheorem{theorem}{Theorem}[section]
\newtheorem{lemma}[theorem]{Lemma}
\newtheorem{proposition}[theorem]{Proposition}
\theoremstyle{definition}
\numberwithin{equation}{section}
\newcommand{\R}{\mathbb{R}}
\newcommand{\eps}{\varepsilon}
\newcommand{\asym}{\beta}
\newcommand{\potent}{\gamma}
\newcommand{\estimate}{A}
\newcommand{\Ha}{{\mathcal{H}}}
\newcommand{\F}{{\mathcal{F}}}
\begin{document}

\title[Strong  Quantitative Isoperimetric inequality]{A strong form of the Quantitative Isoperimetric inequality}

\author{Nicola Fusco}

\author{Vesa Julin}
\address{Dipartimento di Matematica e Applicazioni ``R. Cacciopoli'', Universit\`a degli Studi di Napoli ``Federico II'', Napoli, Italy}
\email{n.fusco@unina.it}
\email{vesa.julin@jyu.fi}

\thanks{}

\keywords{}
\subjclass[2000]{}

\begin{abstract} We give a refinement of the quantitative isoperimetric inequality. We prove that the isoperimetric gap controls not only the Fraenkel asymmetry but also the oscillation of the boundary.   
\end{abstract}
\maketitle

\section{Introduction and statement of the results}

In recent years there has been a growing interest in the study of the stability of a large class of geometric and functional inequalities, such as the isoperimetric and the Sobolev inequality. After some early work going back to the beginning of last century the first quantitative version of the isoperimetric inequality in any dimension was proved by Fuglede in \cite{F}. He showed that if $E$ is a {\it nearly spherical set}, i.e., is a Lipschitz set with the barycenter at the origin and the volume of the unit ball $B_1$ such that
\begin{equation}\label{one}
\partial E=\{z(1+u(z)):\,\,z\in\partial B_1\}\,,
\end{equation}
with $\|u\|_{W^{1,\infty}}$ small, then
\begin{equation}\label{two}
\|u\|^2_{W^{1,2}(\partial B_1)}\leq C\left[P(E)-P(B_1)\right]\,.
\end{equation}
Here $P(\cdot)$ denotes the perimeter of a set. From this estimate he was able to deduce that  the perimeter deficit $P(E)-P(B_1)$ controls also the Hausdorff distance between $E$ and $B_1$, whenever $E$ is nearly spherical or convex.
\par
However, Hausdorff distance is too strong when dealing with general sets of finite perimeter and one must replace it (see \cite{H}) by the so called {\it Fraenkel asymmetry } index 
$$
\alpha(E):=\min_{y\in\R^n}\Bigl\{\frac{|E\Delta B_r(y)|}{r^n}:\,\,|B_r|=|E|\Bigr\}\,.
$$
Then,  the  {\it quantitative isoperimetric} inequality states that there exists a constant $C=C(n)$ such that
\begin{equation}\label{three}
\alpha(E)^2\leq CD(E),\,
\end{equation}
where $D(E)$ stands for the {\it isoperimetric deficit}
$$
D(E):=\frac{P(E)-P(B_r)}{r^{n-1}},\qquad \text{with $|B_r|=|E|\,.$}
$$
Note that in this inequality, first proved in \cite{FuMP} with symmetrization techniques, the exponent $2$ on the left hand side is optimal, i.e., it cannot be replaced by any smaller number. Later on Figalli, Maggi and Pratelli in  \cite{FMP} extended \eqref{three} to the anisotropic perimeter via an optimal transportation argument, while a short proof in the case of the standard perimeter has been recently given in \cite{CL} with an argument based on the regularity theory of area almost minimizers.
\par
In this paper we prove a stronger form of the quantitative inequality \eqref{three}. The underlying idea is that the perimeter deficit should control not only the $L^1$ distance between $E$ and some optimal ball, that is the Fraenkel asymmetry, but also  the oscillation of the boundary. 
\par
Let us fix some notation. Given a ball $B_r(y)$, let us denote by $\pi_{y,r}$ the projection of $\R^n\setminus\{y\}$ onto the boundary $\partial B_r(y)$, that is 
$$
\pi_{y,r}(x):=y+r\frac{x-y}{|x-y|}\qquad\text{for all $x\not=y$}
$$
and let us define the asymmetry index as
$$
A(E):=\min_{y\in\R^n}\biggl\{\frac{|E\Delta B_r(y)|}{r^n}+\biggl(\frac{1}{r^{n-1}}\int_{\partial^*E}|\nu_E(x)-\nu_{B_r(y)}(\pi_{y,r}(x))|^2\,d\Ha^{n-1}(x)\biggr)^{1/2}\!:\,\, |B_r|=|E|\biggr\}\,,
$$
where $\partial^*E$ is the reduced boundary of  $E$ and $\nu_E$ is its generalized exterior normal. Then, our main result reads as follows.
\begin{theorem}
\label{mainthm}
Let $n\geq2$. There exists a constant $C(n)$ such that for every set $E\subset\R^n$ of finite perimeter
\begin{equation}\label{main}
A(E)^2\leq CD(E)\,.
\end{equation}
\end{theorem}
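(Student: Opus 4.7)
The plan is to follow the selection-principle strategy of Cicalese--Leonardi \cite{CL}. I first prove the inequality when $E$ is nearly spherical, directly from Fuglede's estimate \eqref{two}; then I reduce the general case to the nearly spherical one by a penalized variational problem. A central tool, used in both steps and particularly to pass information through the reduction, is the identity
$$
\int_{\partial^* F}\Bigl|\nu_F(x)-\frac{x-y}{|x-y|}\Bigr|^2 d\Ha^{n-1}(x) \;=\; 2P(F)-2(n-1)\int_F \frac{dx}{|x-y|}\qquad(n\ge 2),
$$
valid by the divergence theorem since $1/|x-y|\in L^1_{\mathrm{loc}}$. Its right-hand side is $L^1$-lower semicontinuous in $F$, which makes the normal-oscillation piece of $A(E)$ amenable to the selection argument.

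\textbf{Nearly spherical case.} For $E$ as in \eqref{one} with $\|u\|_{W^{1,\infty}}$ small, the parametrization $z\mapsto z(1+u(z))$ yields $\pi_{0,1}(z(1+u(z)))=z$ and
$$
\nu_E(z(1+u(z)))=\frac{z-(1+u(z))^{-1}\nabla_\tau u(z)}{\sqrt{1+(1+u(z))^{-2}|\nabla_\tau u(z)|^2}},
$$
so $|\nu_E-\nu_{B_1}(\pi_{0,1})|^2\lesssim|\nabla_\tau u|^2$ pointwise. Integrating against the area element on $\partial E$ bounds the normal-oscillation piece of $A(E)^2$ by $C\|\nabla_\tau u\|_{L^2(\partial B_1)}^2$, and clearly $|E\Delta B_1|\lesssim\|u\|_{L^2(\partial B_1)}$. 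Combining, $A(E)^2\lesssim\|u\|_{W^{1,2}(\partial B_1)}^2\lesssim D(E)$ by \eqref{two} (after a small translation to recenter the barycenter, affecting only constants).

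\textbf{Reduction.} Arguing by contradiction, assume there exist $E_h$ with $|E_h|=|B_1|$, $D(E_h)\to 0$ and $A(E_h)^2\ge h D(E_h)$. After translation I may assume the infimum in $A(E_h)$ is essentially realized at $y=0$, so $A(E_h)\simeq A_0(E_h)$ where
$$
A_0(F):=|F\Delta B_1|+\Bigl(\int_{\partial^* F}\bigl|\nu_F-x/|x|\bigr|^2\,d\Ha^{n-1}\Bigr)^{1/2}.
$$
By the identity above, $A_0$ is $L^1$-LSC. I then solve an appropriate penalized problem
$$
\min\bigl\{P(F)+\Lambda_h\Phi_h(F;E_h) : |F|=|B_1|,\ F\subset B_R\bigr\},
$$
designed so that (i) a minimizer $F_h$ exists, (ii) comparison with the competitor $E_h$ gives $D(F_h)\le D(E_h)$ and $A_0(F_h)\simeq A_0(E_h)$, and (iii) the identity recasts the penalization as a perimeter term with coefficient close to one plus $L^1$-continuous pieces, so $F_h$ is an $\omega$-minimizer of the perimeter with $\omega\to 0$. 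By De Giorgi--Tamanini regularity, $F_h\to B_1$ in $C^{1,\alpha}$, so $F_h$ is nearly spherical for $h$ large. The first step then yields $A_0(F_h)^2\lesssim D(F_h)\le D(E_h)$, contradicting $A_0(F_h)^2\simeq A(E_h)^2\ge h D(E_h)$.

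\textbf{Main obstacle.} The delicate point is designing the penalization $\Phi_h$: it must be $L^1$-LSC (for existence of $F_h$), must prevent $A_0(F_h)$ from dropping far below $A_0(E_h)$, and must remain lower-order with respect to the perimeter so as to retain $\omega$-minimality. The naive choice $\Phi_h=(A_0(E_h)-A_0(F))_+$ fails LSC because $-A_0$ is only $L^1$-USC. The way around is to treat the two pieces of $A_0$ separately: the Fraenkel part $|F\Delta B_1|$ admits a Cicalese--Leonardi-style continuous penalization, while the normal-oscillation piece is handled through the identity, replacing it by $2P(F)-2(n-1)\int_F dx/|x|$ and absorbing the $2P$ contribution into the leading perimeter term via a careful calibration of the coefficient. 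Once this is set up, the rest of the Cicalese--Leonardi scheme, combined with Fuglede's inequality, closes the argument.
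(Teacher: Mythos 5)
Your proposal follows the same broad route as the paper (Cicalese--Leonardi selection principle, the divergence-theorem identity for the oscillation term, Fuglede for nearly spherical sets, a penalized problem to reduce to that case), and you correctly identify the LSC issue and the need to calibrate the coefficient of the penalization against the hidden perimeter inside the oscillation term. But there are two substantive gaps.

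First, you carry both pieces of $A_0$ through the penalization and try to engineer a continuous penalty for the Fraenkel part as well. The paper avoids this by first proving a Poincar\'e-type estimate (its Proposition~\ref{vesaprop}), namely $A(E)+\sqrt{D(E)}\leq C\beta(E)$, where $\beta(E)^2$ is (half) the normal-oscillation integral. The proof of that Proposition is an elementary rearrangement argument: writing $\beta(E)^2=D(E)-\int_{E\setminus B_1}\frac{n-1}{|x|}+\int_{B_1\setminus E}\frac{n-1}{|x|}$, one replaces $E\setminus B_1$ and $B_1\setminus E$ by spherical shells of the same volume and uses the uniform concavity of $t\mapsto(1+t)^{(n-1)/n}$ to produce the term $|E\Delta B_1|^2$. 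With this in hand the whole proof reduces to $\beta(E)^2\leq CD(E)$ and the penalization only involves $\big|\beta(F)^2-\eps_k^2\big|$. Since $\beta(F)^2=P(F)-(n-1)\gamma(F)$ with $\gamma(F)=\max_y\int_F|x-y|^{-1}dx$ which is $L^1$-continuous, LSC follows immediately once the coefficient is taken strictly below $1$ (the paper uses $\tfrac14$). Your separate treatment of the Fraenkel piece is unnecessary machinery and you have not actually specified a penalty making steps (i)--(iii) hold simultaneously; the Proposition makes this design problem disappear.

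Second, your jump from ``$F_h$ is an $\omega$-minimizer'' to ``$F_h\to B_1$ in $C^{1,\alpha}$'' elides a step that the paper has to work for. The almost-minimality estimate requires controlling $\int_{F_k\Delta G}|x-y_{F_k}|^{-1}dx$, which is only bounded if the comparison ball $B_r(y)$ stays away from the center $y_{F_k}$; and when $B_r(y)$ is deep inside the set the comparison has to be trivial, which needs $B_{1-\eps}\subset F_k$. Both require Hausdorff (not merely $L^1$) convergence of $F_k$ to $B_1$. The paper obtains this by first showing the $F_k$ are $(K,r_0)$-quasiminimizers (a much weaker property, which is easy to check directly from the minimality and the identity for $\beta^2$), and then invoking David--Semmes porosity to upgrade $L^1$ to Hausdorff convergence; only after that is the $(\Lambda',r_0)$-almost minimality established and White's theorem applied. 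Without this intermediate quasiminimality/porosity step the application of De~Giorgi--Tamanini/White regularity is not justified. A related minor point: you impose the hard constraint $|F|=|B_1|$, whereas the paper uses the soft penalization $\Lambda\big||F|-\omega_n\big|$ with $\Lambda>n$; the soft version is needed so that arbitrary compact perturbations $G$ are admissible competitors in the almost-minimality check.
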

A few comments on this inequality are in order. First, let us observe that \eqref{main} is essentially equivalent to  the estimate \eqref{two} for nearly spherical sets. In fact, if $|E|=|B_1|$ and $\partial E$ is as in \eqref{one}, then the normal vector at a point $x(z)=z(1+u(z))$ is given by
$$
\nu_E(x(z))=\frac{z(1+u(z))+\nabla_\tau u(z)}{\sqrt{(1+u)^2+|\nabla_\tau u|^2}}\,,
$$
where $\nabla_\tau u$ stands for the tangential gradient of $u$ on the unit sphere. Therefore, recalling that $\|u\|_{W^{1,\infty}}$ is small, one easily gets
\[
\begin{split}
A(E)^2 & \leq \biggl[|E\Delta B_1|+\biggl(\int_{\partial E}\Bigl|\nu_E(x)-\frac{x}{|x|}\Bigr|^2d\Ha^{n-1}\biggr)^{1/2}\biggr]^2 \\
 & \leq C \biggl[\biggl(\int_{B_1}|u|\,dx\biggr)^2+\int_{\partial B_1}\biggl(1-\frac{1+u(z)}{\sqrt{(1+u)^2+|\nabla_\tau u|^2}}\biggr)d\Ha^{n-1}\biggr] \\
 & \leq C \biggl[\int_{B_1}|u|^2\,dx+\int_{\partial B_1}|\nabla_\tau u|^2\,d\Ha^{n-1}\biggr]\,.
\end{split}
\]
Hence, \eqref{main} follows by combining this inequality with \eqref{two}.
\par
Next observation is that since the second integral in the definition of $A(E)$ behaves like the $L^2$ distance between two gradients, it should control the symmetric difference $|E\Delta B_r(y)|$ as in a Poincar\'e type inequality. This is precisely the statement of the next result.
\begin{proposition}\label{vesaprop} There exists a constant $C(n)$ such that if $E$ is a set of finite perimeter, then
\begin{equation}\label{five}
A(E)+\sqrt{D(E)}\leq C\beta(E)\,,
\end{equation}
where
\begin{equation}\label{defbeta}
\beta(E):=\min_{y\in\R^n}\biggl\{\biggl(\frac{1}{2r^{n-1}}\int_{\partial^*E}|\nu_E(x)-\nu_{B_r(y)}(\pi_{y,r}(x))|^2\,d\Ha^{n-1}(x)\biggr)^{1/2}\!:\,\,|B_r|=|E|\biggr\}\,.
\end{equation}
\end{proposition}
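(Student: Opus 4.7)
The plan is to choose a minimizer $y^*$ of $\beta(E)$ (existence: $f(y):=P(E)-(n-1)\int_E |x-y|^{-1}\,dx$ is continuous, nonnegative, and tends to $P(E)$ as $|y|\to\infty$) and to apply the divergence theorem with the radial unit vector field $X(x):=(x-y^*)/|x-y^*|$, which satisfies $\operatorname{div} X = (n-1)/|x-y^*|$ and agrees with $\nu_{B_r(y^*)}\circ \pi_{y^*,r}$. Expanding $|\nu_E-X|^2 = 2-2\nu_E\cdot X$ gives
$$r^{n-1}\beta(E)^2 \;=\; P(E)-(n-1)\int_E\frac{dx}{|x-y^*|}.$$
Since $(n-1)\int_{B_r(y^*)}|x-y^*|^{-1}\,dx = P(B_r)$, inserting and subtracting the constant $1/r$ and using the volume balance $|B_r(y^*)\setminus E|=|E\setminus B_r(y^*)|$ produces the identity
$$r^{n-1}\beta(E)^2 \;=\; D(E)\,r^{n-1} \;+\; (n-1)\int_F \Bigl| \frac{1}{|x-y^*|} - \frac{1}{r}\Bigr|\,dx, \qquad F:=E\,\Delta\,B_r(y^*),$$
whose two right-hand-side terms are both nonnegative (the second because $1/|x-y^*|\gtrless 1/r$ inside/outside $B_r(y^*)$). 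In particular, $\sqrt{D(E)}\le\beta(E)$.

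For the harder bound $|F|/r^n\leq C\beta(E)$, I dissect $F$ according to $|x-y^*|$. On $\{|x-y^*|\leq r/2\}\cup\{|x-y^*|\geq 2r\}$ the weight $|1/|x-y^*|-1/r|$ is at least $1/(2r)$, so the identity immediately yields $|F\cap\{|x-y^*|\notin(r/2,2r)\}|\leq Cr^n\beta(E)^2$. On the annulus $\{r/2<|x-y^*|<2r\}$ the weight only satisfies $|1/|x-y^*|-1/r|\geq |r-|x-y^*||/(2r^2)$, and the best one can extract is
$$\int_{F\cap\{r/2<|x-y^*|<2r\}}\bigl|r-|x-y^*|\bigr|\,dx \;\leq\; Cr^{n+1}\beta(E)^2.$$

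The main difficulty lies in turning this degenerate weighted estimate into a linear-in-$\beta$ bound on the unweighted measure. For a parameter $\varepsilon>0$, split the annular piece of $F$ by whether $|r-|x-y^*||\gtreqless \varepsilon r$: the part inside the $\varepsilon$-shell around $\partial B_r(y^*)$ has measure $\leq C\varepsilon r^n$, and the part outside has measure $\leq Cr^n\beta(E)^2/\varepsilon$ by Chebyshev applied to the displayed bound. Choosing $\varepsilon\sim\beta(E)$ is admissible when $\beta(E)\leq 1/2$, while the regime $\beta(E)\geq 1/2$ is handled by the trivial estimate $|F|\leq 2\omega_n r^n\leq C\beta(E) r^n$. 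Either way, $|F\cap\text{annulus}|\leq Cr^n\beta(E)$, hence $|F|/r^n\leq C\beta(E)$. Because $\beta$ is attained at $y^*$, testing the definition of $A(E)$ at this $y^*$ yields $A(E)\leq |F|/r^n+\sqrt{2}\,\beta(E)\leq C\beta(E)$; combining with $\sqrt{D(E)}\leq\beta(E)$ gives \eqref{five}.
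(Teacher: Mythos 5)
Your argument is correct, and it takes a genuinely different route from the paper in the crucial step, namely bounding the symmetric difference $|E\,\Delta\,B_r(y^*)|$ by $\beta(E)$. Both proofs start from the same divergence-theorem identity, which in the paper's normalization ($r=1$) reads $\beta(E)^2 = D(E) - (n-1)\int_{E\setminus B_1}|x|^{-1}\,dx + (n-1)\int_{B_1\setminus E}|x|^{-1}\,dx$, equivalent to your $r^{n-1}\beta(E)^2 = D(E)\,r^{n-1} + (n-1)\int_F |\,|x-y^*|^{-1} - r^{-1}|\,dx$. From there, the paper replaces $E\setminus B_1$ and $B_1\setminus E$ by the optimal annuli $A(R,1)$ and $A(1,r)$ of the same volumes, evaluates the resulting integrals in closed form, and invokes uniform concavity of $t\mapsto(1+t)^{(n-1)/n}$ to conclude $\beta(E)^2 \geq D(E) + c\,|E\Delta B_1|^2$. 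You instead split $F$ according to the distance of a point from the sphere: outside the annulus $\{r/2<|x-y^*|<2r\}$ the weight is bounded below by $1/(2r)$ and gives the measure bound directly; inside, you lower-bound the weight linearly in $|r-|x-y^*||$, split by a threshold $\varepsilon r$, and combine the trivial shell estimate with Chebyshev, optimizing $\varepsilon\sim\beta(E)$. The paper's route is slicker and produces the sharper-looking quadratic inequality $\beta^2\gtrsim D + |F|^2$ in one stroke with no case distinction, by exploiting the radially decreasing structure of the weight through rearrangement; your route avoids both the rearrangement comparison and the explicit concavity computation, and is more elementary and somewhat more robust (it would survive perturbations of the weight that destroy the exact computation), at the cost of the case analysis in $\beta$ and $\varepsilon$. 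The final assembly — testing $A(E)$ at the minimizing center $y^*$ to get $A(E)\leq |F|/r^n + \sqrt{2}\,\beta(E)$ and combining with $\sqrt{D(E)}\leq\beta(E)$ — matches the paper's conclusion.
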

In view of \eqref{five}, the proof of the strong quantitative estimate \eqref{main} reduces to proving that
\begin{equation}\label{six}
\beta(E)^2\leq CD(E)\,,
\end{equation}
for a suitable constant $C=C(n)$. 
\par
In order to prove  this inequality we follow the  strategy introduced  in \cite{CL}  for proving the quantitative inequality stated in \eqref{three}, with some further simplifications due to  \cite{AFM}, where a different isoperimetric problem is considered (see also  \cite{FM} for a similar approach).
\par
The starting point is  the above observation that Fuglede's result implies \eqref{six} for nearly spherical sets. Then, we argue by contradiction assuming that there exists a sequence of equibounded sets $E_k\subset B_{R_0}$, for some $R_0>0$, $|E_k|=|B_1|$, converging in $L^1$ to the unit ball and for which \eqref{six} does not hold. The idea is to replace this sequence by  minimizers $F_k$ of the following penalized problems
$$
\min\bigl\{P(F)+\frac14|\beta(F)^2-\beta(E_k)^2|+\Lambda\bigl||F|-|B_1|\bigr|:\,\,F\subset B_{R_0}\bigr\}\,,
$$
where $\Lambda>n$ is a fixed constant. Then, we show that also $F_k$ converges  in $L^1$ to the unit ball. Moreover, each $F_k$ is an area almost minimizer. Thus, a well known result of B. White (see \cite{W}) yields that the sets $F_k$ actually converge in $C^1$ to the unit ball, and in particular that for $k$ large they are all nearly spherical. This immediately gives a contradiction on observing that if \eqref{six} does not hold for $E_k$, the same is true also  for $F_k$.
\par
We conclude with a final remark. In order to prove the area almost minimality of the sets $F_k$ we have to show preliminarily that they are area quasiminizers. This is a much weaker notion
 than almost minimality (see definition \eqref{quasimin} below), but it is enough to ensure that the sets are uniformly porous (see \cite{DS} and \cite{KKLS}). This mild regularity property turns out to be an essential tool to pass from the $L^1$ convergence to the Hausdorff convergence of the sets.

\section{Preliminaries}
We denote by $B_r(x)$ a ball with radius $r$ centered at $x$ and write  $B_r$ when the center is at the origin. We set $\omega_n:=|B_1|$.  If $E$ is a measurable set in $\R^n$ we denote by $P(E)$ its perimeter and by $\partial^*E$ its reduced boundary. The generalized outer normal will be denoted by $\nu_E$. For the precise definition of these quantities and their main properties we  refer to \cite{AFP}.

A key tool in the proof of Theorem \ref{mainthm} is the result by Fuglede \cite{F}. As  observed in the Introduction, it  implies   \eqref{main}  for Lipschitz sets which are  close to the unit ball in $W^{1, \infty}$. 
\begin{theorem}[Fuglede]
\label{Fuglede}
Suppose that $E\subset\R^n$ has its barycenter at origin, $|E|= \omega_n$,   and that
\[
\partial E= \{ ( 1 + u(z) ) \, z: \,\, z \in \partial B_1\}
\]
for $u \in W^{1, \infty}(\partial B_1)$. There exist $c>0$ and $\eps_0 >0$ such that if $ || u||_{W^{1, \infty}(\partial B_1)} \leq \eps_0$, then
\[
D(E) \geq c \, || u||_{W^{1, 2}(\partial B_1)}^2\,.
\]  
 Moreover,
\begin{equation}\label{fuglede1}
\beta(E)^2\leq A(E)^2\leq C_0D(E)\,,
\end{equation}
for some positive constant $C_0$ depending only on $n$.
\end{theorem}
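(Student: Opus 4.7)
The plan is to prove the two inequalities in succession: first Fuglede's classical estimate $D(E)\geq c\,\|u\|_{W^{1,2}(\partial B_1)}^2$, and then the bound $A(E)^2\leq C\,\|u\|_{W^{1,2}(\partial B_1)}^2$; chaining these together with the trivial $\beta(E)\leq A(E)$ delivers \eqref{fuglede1}.

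For the Fuglede inequality, I would start from the parametrizations
\[
P(E)=\int_{\partial B_1}(1+u)^{n-2}\sqrt{(1+u)^2+|\nabla_\tau u|^2}\,d\Ha^{n-1},\qquad \frac{1}{n}\int_{\partial B_1}(1+u)^n\,d\Ha^{n-1}=\omega_n,
\]
Taylor expand both to second order in $u$, and use $\|u\|_{W^{1,\infty}}\leq\eps_0$ to dominate cubic remainders by a small multiple of the quadratic part. Eliminating the linear-in-$u$ piece via the volume identity yields
\[
D(E)\geq \frac{1-\eta(\eps_0)}{2}\int_{\partial B_1}\bigl[|\nabla_\tau u|^2-(n-1)u^2\bigr]\,d\Ha^{n-1}.
\]
Decomposing $u=\sum_{k\geq 0}u_k$ in spherical harmonics and using $\int|\nabla_\tau Y_k|^2=k(k+n-2)\int Y_k^2$, the right-hand side equals $\tfrac{1-\eta}{2}\sum_k(k-1)(k+n-1)\|u_k\|_{L^2}^2$, which is $\geq (n+1)\|u_k\|_{L^2}^2$ for each $k\geq 2$. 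The $k=0$ component is forced to satisfy $\|u_0\|_{L^2}\leq C\|u\|_{L^2}^2$ by the volume constraint, and the $k=1$ component to the same bound by the barycenter constraint; hence the only nonpositive or vanishing contributions are absorbable for $\eps_0$ small, and one recovers coercivity of the quadratic form on $W^{1,2}(\partial B_1)$.

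For the \emph{moreover} statement, note first that $\beta(E)\leq A(E)$ follows directly from the definitions: at any admissible $y$ the $\beta$-objective differs from the integral part of the $A$-objective by a factor $1/\sqrt{2}$, and the $L^1$ term in $A$ is nonnegative, so using the $A$-minimizing $y$ as a competitor for $\beta$ gives the inequality. To bound $A(E)^2$, I would use $y=0$, $r=1$ as competitor. The symmetric difference contributes $|E\Delta B_1|\leq C\int_{\partial B_1}|u|$, and hence $|E\Delta B_1|^2\leq C\|u\|_{L^2}^2$ by Cauchy--Schwarz. For the normal-deviation term, since $\pi_{0,1}(x(z))=z$ and
\[
\nu_E(x(z))=\frac{(1+u)z-\nabla_\tau u}{\sqrt{(1+u)^2+|\nabla_\tau u|^2}},
\]
one computes $|\nu_E(x(z))-z|^2=2\bigl(1-\tfrac{1+u}{\sqrt{(1+u)^2+|\nabla_\tau u|^2}}\bigr)\leq C|\nabla_\tau u|^2$ in the regime $\|u\|_{W^{1,\infty}}\leq \eps_0$. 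Transferring from $\partial^*E$ to $\partial B_1$ via the area element (uniformly bounded for $\eps_0$ small) yields $\int_{\partial^*E}|\nu_E-\nu_{B_1}\circ\pi_{0,1}|^2\leq C\int_{\partial B_1}|\nabla_\tau u|^2$. Summed with the first bound this gives $A(E)^2\leq C\|u\|_{W^{1,2}}^2$, and combining with the Fuglede inequality completes \eqref{fuglede1}.

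The principal obstacle is the bookkeeping needed in the Fuglede step: one must verify that the cubic and higher Taylor remainders for both $P(E)$ and the volume identity are absorbable into the leading quadratic form uniformly in the regime $\|u\|_{W^{1,\infty}}\leq\eps_0$, and that the volume and barycenter constraints really do force the $k=0,1$ spherical harmonic components of $u$ to be quadratic (not merely linear) in $\|u\|_{L^2}$. Once coercivity of the quadratic form is in hand, the chain $\beta(E)^2\leq A(E)^2\leq C\|u\|_{W^{1,2}}^2\leq C_0\,D(E)$ reduces to the direct computations above.
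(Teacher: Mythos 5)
Your proposal is correct and follows essentially the same approach as the paper: the bound $A(E)^2\leq C\|u\|_{W^{1,2}(\partial B_1)}^2$ is exactly the paper's introductory computation (normal vector formula, $|\nu_E-z|^2=2(1-\nu_E\cdot z)$, smallness of $\|u\|_{W^{1,\infty}}$), together with the trivial $\beta(E)\leq A(E)$ and Fuglede's estimate this gives \eqref{fuglede1}. Your only addition is a sketch of the classical proof of $D(E)\geq c\|u\|_{W^{1,2}}^2$, which the paper simply cites from \cite{F}; the sketch is accurate (the quadratic form $\tfrac12\int(|\nabla_\tau u|^2-(n-1)u^2)$, the eigenvalues $(k-1)(k+n-1)$, and the suppression of the $k=0,1$ modes to second order via the volume and barycenter constraints are all standard and correctly stated), and your sign for $\nabla_\tau u$ in $\nu_E$ is in fact the correct one, the discrepancy with the paper's display being immaterial since $z\cdot\nabla_\tau u=0$.
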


Another key ingredient in our proof is the regularity of   area almost minimizers. To this aim, we recall that a set $F$ is an area {\it $( \Lambda, r_0)$-almost minimizer}  if for every $G$, such that $G \Delta F \Subset B_r(x)$ with $r \leq r_0$, it holds
\[
P(F) \leq P(G) + \Lambda r^n.
\]
Next result is contained in \cite{W}.
\begin{theorem}[B. White] 
\label{areamin}
Suppose that $F_k$ is a sequence of  area $(\Lambda, r_0)$-almost minimizers  such that 
\[
\sup_k \, P(F_k) < \infty  \quad \text{and} \quad \chi_{F_k} \to \chi_{ B_1} \quad \text{in} \,\,L^1. 
\] 
Then, for $k$ large, each $F_k$ is of class $C^{1, \frac{1}{2}}$ and 
\[
\partial F_k = \{ (1+u_k(z))\,z \mid z \in \partial B_1 \}\,,
\]
with $u_k \to 0$ in $ C^{1, \alpha}(\partial B_1)$ for every $ \alpha \in (0, \frac{1}{2})$. 
\end{theorem}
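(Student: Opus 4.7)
The plan is to combine uniform density estimates (coming from almost minimality) with classical $\varepsilon$-regularity for area almost minimizers, and then repackage the resulting local $C^{1,\alpha}$ graphs as a single radial graph over $\partial B_1$.

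First I would establish uniform upper and lower density bounds. For any $x \in \partial F_k$ and $r \le r_0$, comparing $F_k$ with the competitors $F_k \setminus B_r(x)$ and $F_k \cup B_r(x)$ via the almost minimality inequality $P(F_k) \le P(G) + \Lambda r^n$, and then invoking the relative isoperimetric inequality on $B_r(x)$, yields
\[
c\, r^n \le |F_k \cap B_r(x)|, \qquad c\, r^n \le |B_r(x) \setminus F_k|
\]
for all $r$ below a threshold depending only on $n$ and $\Lambda$, with $c > 0$ independent of $k$. These density estimates, combined with the $L^1$ convergence $\chi_{F_k} \to \chi_{B_1}$, upgrade the convergence to Hausdorff convergence of boundaries: $\partial F_k \to \partial B_1$ in Hausdorff distance, so in particular $\partial F_k \subset \{\, 1-\eps_k < |x| < 1+\eps_k \,\}$ with $\eps_k \to 0$.

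Second, I would invoke the $\varepsilon$-regularity theorem for $(\Lambda, r_0)$-almost minimizers in the form due to Tamanini. The Hausdorff closeness of $\partial F_k$ to the smooth hypersurface $\partial B_1$, together with the $L^1$ bound and the lower density estimate, forces the spherical excess $\frac{1}{r^{n-1}} \int_{\partial^* F_k \cap B_r(x)} \frac{|\nu_{F_k} - \nu_0|^2}{2} \, d\Ha^{n-1}$ in small balls centered on $\partial F_k$ to be small, uniformly in $k$. Hence in each such ball $\partial F_k$ is a $C^{1, 1/2}$ graph over a hyperplane. Covering $\partial B_1$ by finitely many such balls and using the implicit function theorem (applicable since the normals of $F_k$ converge uniformly to the radial direction) lets me reparameterize $\partial F_k$ as a radial graph $\{\, z(1+u_k(z)) : z \in \partial B_1 \,\}$. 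The uniform $C^{1, 1/2}$ bounds together with the $L^1$ convergence then force $u_k \to 0$ in $C^{1,\alpha}(\partial B_1)$ for every $\alpha < \tfrac12$ by Arzel\`a-Ascoli.

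The main technical obstacle is the $\varepsilon$-regularity step: one must check that the extra $\Lambda r^n$ term in the almost minimality inequality produces only a lower-order perturbation of De Giorgi's iterative excess decay. This is exactly where the harmonic approximation argument is robust enough to accommodate $(\Lambda, r_0)$-almost minimizers; a careful rerun of the excess decay iteration with the added $\Lambda r^n$ term gives decay at rate $r^{2\alpha}$ for every $\alpha < \tfrac12$, saturating at the exponent $\tfrac12$ of the statement. Once this is in place, the remaining covering and reparameterization steps are routine.
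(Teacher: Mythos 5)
The paper does not actually prove this theorem: it is attributed to B.~White and cited as \cite{W} without proof, so there is no internal argument to compare against. Your proposal is a plausible reconstruction of the standard regularity theory for $(\Lambda,r_0)$-almost minimizers, and each of the pieces you invoke (density estimates, Hausdorff convergence of boundaries, Tamanini-type $\eps$-regularity, excess decay with the $\Lambda r^n$ perturbation, covering plus reparameterization) belongs to a correct proof. That said, one step is stated in a way that does not actually follow as written.

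The gap is in the sentence asserting that ``Hausdorff closeness of $\partial F_k$ to $\partial B_1$, together with the $L^1$ bound and the lower density estimate, forces the spherical excess \dots to be small.'' Hausdorff closeness plus lower volume density does \emph{not} by itself bound the excess: a boundary could sit inside a thin annular neighborhood of $\partial B_1$ yet oscillate wildly there, producing excess of order one in every small ball. What rules this out, and what you need to state explicitly, is convergence of the perimeter measures, i.e.\ $\Ha^{n-1}\llcorner\partial F_k \rightharpoonup \Ha^{n-1}\llcorner\partial B_1$ weakly-$\ast$. For almost minimizers this is standard: the lower bound on $\liminf_k P(F_k;A)$ is lower semicontinuity, while the matching upper bound $\limsup_k P(F_k;A)\le P(B_1;A)$ comes from comparing $F_k$ with $(B_1\cap A)\cup(F_k\setminus A)$ and using almost minimality. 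Only once you have local convergence of perimeters can you combine it with the Hausdorff trapping of $\partial F_k$ near the smooth surface to conclude that the cylindrical/spherical excess in a ball $B_r(x)$, $x\in\partial F_k$, is eventually below the $\eps$-regularity threshold uniformly in $k$ (for $r$ fixed small). After that your $\eps$-regularity, covering, implicit function theorem, and Arzel\`a--Ascoli steps go through as you describe, yielding the uniform $C^{1,1/2}$ graphs and $C^{1,\alpha}$ convergence for $\alpha<\tfrac12$.
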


We will also use the theory of the so called area $(K, r_0)$-quasiminimizers.  We say that a set $F$ is an area  {\it $(K, r_0)$-quasiminimizer}  if for every $G$, such that $ G \Delta F \Subset   B_{r}(x)$ with $r \leq r_0$, the following inequality holds
\begin{equation}\label{quasimin}
P(F; B_{r}(x) ) \leq K \, P(G; B_{r}(x)) \,.
\end{equation}
Here $P(G; B_{r}(x)) $ stands for the perimeter of $G$  in $B_{r}(x)$.

The regularity of $(K, r_0)$-quasiminimizers is very weak. Nevertheless we have the following result by David and Semmes \cite{DS}, see also  Kinnunen, Korte, Lorent and Shanmugalingam \cite{KKLS}, where the result below is proven in a general metric space. 
\begin{theorem}[David \& Semmes] 
\label{david}
Suppose that $F$ is an area $(K, r_0)$-quasiminimizer. Then, up to modifying $F$ in a set of measure zero, the topological boundary of $F$ coincides with the reduced boundary, i.e., 
$\partial F = \partial^* F$. 
\par
Moreover $F$ and $\R^n \setminus F$ are locally porous, i.e.,  there exist $R> 0 $ and $C>1$ such that for any $0 < r< R$ and every $x \in \partial F$ there are points $y,z \in B_r(x)$ for which
\[
B_{r/C}(y) \subset F \qquad \text{and} \qquad B_{r /C}(z) \subset \R^n \setminus F.
\] 
\label{david.semmes}

\end{theorem}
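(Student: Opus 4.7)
The plan is to reduce both conclusions to uniform density estimates for the quasiminimizer $F$ and then extract the rest by measure-theoretic and covering arguments. All constants below depend only on $n$ and $K$.

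First I would establish the central density estimates: there exist $c > 0$ and $r_1 \in (0, r_0]$ such that, for every $x \in \partial^* F$ and every $r < r_1$,
\[
c r^n \leq |F \cap B_r(x)| \leq (\omega_n - c) r^n, \qquad c r^{n-1} \leq P(F; B_r(x)) \leq C r^{n-1}.
\]
Setting $m(r) := |F \cap B_r(x)|$, I would test quasiminimality against the competitor $F \setminus B_r(x)$ to obtain $P(F; B_r(x)) \leq K \Ha^{n-1}(F \cap \partial B_r(x)) = K m'(r)$ for a.e.\ $r$. Combining this with the relative isoperimetric inequality in $B_r(x)$, namely $\min\{m(r), \omega_n r^n - m(r)\}^{(n-1)/n} \leq c_n P(F; B_r(x))$, yields on any interval where $m(r) \leq \omega_n r^n / 2$ the differential inequality $m(r)^{(n-1)/n} \leq c_n K m'(r)$, whose integration gives $m(r) \geq c r^n$. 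The upper bound on $m$ is symmetric (test against $F \cup B_r(x)$), and the perimeter bounds follow by inserting the mass bounds back into the two inequalities.

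Armed with these estimates, I would pass to the precise representative $F^* := \{ x \in \R^n : \liminf_{r \to 0} |F \cap B_r(x)|/(\omega_n r^n) > 0 \}$, which differs from $F$ by a null set. The density estimates propagate by continuity to every point of $\overline{\partial^* F}$ and exclude densities $0$ and $1$ there, which forces $\partial F^* = \partial^* F$. For porosity, fix $x \in \partial F^*$ and $r < r_1$. The density bounds easily give the Ahlfors estimate $\Ha^{n-1}(\partial F^* \cap B_r(x)) \leq C r^{n-1}$. If every $y \in F^* \cap B_{r/2}(x)$ satisfied $\dist(y, \partial F^*) < r/C_0$ for some large $C_0$, a Vitali cover of $F^* \cap B_{r/2}(x)$ by balls of radius $r/C_0$ centered on $\partial F^*$, combined with the Ahlfors bound, would yield $|F^* \cap B_{r/2}(x)| \leq C' r^n / C_0$, contradicting the lower density estimate once $C_0$ is large. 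This produces a ball of radius $r/C_0$ inside $F^*$; the complementary ball is obtained by running the same argument with $F^*$ replaced by $\R^n \setminus F^*$.

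The main obstacle is the differential inequality for $m(r)$: one must use the \emph{relative} isoperimetric inequality on $B_r(x)$ (the global one is useless at these scales) and track carefully that $K$ enters only multiplicatively, so that integration produces honest polynomial lower bounds. Once these density estimates are secured, both the identification $\partial F^* = \partial^* F$ and the porosity become essentially topological and measure-theoretic consequences, which is in fact why the conclusion survives in the metric-space generality of \cite{KKLS}.
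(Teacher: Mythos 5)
The paper does not supply a proof of this theorem; it is quoted as a known result from \cite{DS} and \cite{KKLS}, so there is no in-paper argument for you to match. Your outline reproduces the standard route that underlies those references, and it is sound in its essentials: test quasiminimality against the competitors $F\setminus B_\rho(x)$ and $F\cup B_\rho(x)$, let the outer ball in the quasiminimality inequality shrink to $B_\rho(x)$ to obtain $P(F;B_\rho(x))\le K\, m'(\rho)$ for a.e.\ $\rho$, feed this into the relative isoperimetric inequality to get a differential inequality for $m(\rho)=|F\cap B_\rho(x)|$, integrate it to uniform volume and perimeter density bounds, and then derive the boundary identification and porosity from a Vitali covering combined with Ahlfors regularity of the perimeter measure. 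The strategy and the way $K$ enters only multiplicatively are exactly right.

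Two places deserve more care in a full write-up. First, the inequality $m(\rho)^{(n-1)/n}\le c_n K\, m'(\rho)$ is only available where $m(\rho)\le \frac{1}{2}\omega_n\rho^n$; you should integrate back from $r$ to the largest $\rho^*<r$ at which this constraint is saturated (with $\rho^*=0$ if it never is), and run the symmetric argument on $\omega_n\rho^n-m(\rho)$, rather than tacitly assume the side condition persists on the whole interval. Second, the density estimates actually yield $\partial F^*=\overline{\partial^*F}$, i.e.\ the topological boundary of the good representative equals the support of the perimeter measure; the pointwise identity $\partial F^*=\partial^*F$ cannot be literal, since the reduced boundary is generally not closed. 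The theorem as quoted shares this imprecision, and it is harmless here: in Step~2 of the proof of Theorem~\ref{mainthm} only the porosity and the fact that $\partial F_k$ is the (Ahlfors-regular) support of the perimeter measure are used. With these details supplied your sketch proves the statement.
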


\section{Proof of the theorem}
In this section we give the proof of Theorem~\ref{mainthm}. Since the  quantities $A(E)$ and $D(E)$  in \eqref{main} are scale invariant, we shall assume from now on and without loss of generality that $|E|=\omega_n$. Moreover, in view of Proposition~\ref{vesaprop}, whose proof will be given at the end of this section, we will only need  to prove the estimate \eqref{six}. 
\par
Thus, we begin by giving a closer look to the oscillation term $\beta(E)$ defined in \eqref{defbeta}. Observe that by the divergence theorem we immediately have 
\[
\begin{split}
\frac12\int_{\partial^*E}|\nu_E(x)-\nu_{B_r(y)}(\pi_{y,r}(x))|^2\,d\Ha^{n-1}(x) & =\int_{\partial^*E}\Bigl(1-\nu_E(x)\cdot\frac{x-y}{|x-y|}\Bigr)\,d\Ha^{n-1}(x) \\
& = P(E)-\int_E\frac{n-1}{|x-y|}\,dx\,.
\end{split}
\]
Therefore, we may write
\begin{equation}
\label{remember}
\asym (E)^2 = P(E) -(n-1)\potent(E)\,,
\end{equation}
where we have set
\begin{equation}
\label{potent}
\potent (E) := \max_{y\in\R^n}   \int_E \frac{1}{|x-y|} \, dx\,.
\end{equation}
We say that a set $E$ is \emph{centered at $y$} if 
\[
\asym(E)^2 =\int_{\partial^* E} \left( 1- \nu_{E} \cdot \frac{x-y}{|x-y|} \right) \, d \Ha^{n-1}(x).
\]
Notice that in general a center of a set is not unique. 

The following simple lemma shows that the centers  of sets, which are close to the  unit ball in $L^1$, are close to the origin.
\begin{lemma}
\label{centerpoint}
For every $\eps>0$ there exists $\delta>0$ such that if  $F \subset B_{R_0}$ and  $|F \Delta B_1| < \delta$, then $|y_F| < \eps$ for every center  $y_F$ of $F$.
\end{lemma}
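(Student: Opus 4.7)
The plan is to argue by contradiction. By the identity derived immediately before the lemma, $\beta(F)^2=P(F)-(n-1)\gamma(F)$, with $\gamma(F)=\max_{y\in\R^n}U_F(y)$ where $U_F(y):=\int_F\frac{dx}{|x-y|}$. Hence a center of $F$ is precisely a maximizer of the Riesz-type potential $U_F$. Suppose the statement fails: there exist $\eps_0>0$ and a sequence $F_k\subset B_{R_0}$ with $|F_k\Delta B_1|\to 0$ admitting centers $y_k$ of $F_k$ with $|y_k|\geq\eps_0$. The goal is to pass to the limit and produce a maximizer of $U_{B_1}$ different from the origin, contradicting the well-known fact that $U_{B_1}$ has a unique maximum at $0$.

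The main technical step is the \emph{uniform-in-$y$} convergence $U_{F_k}\to U_{B_1}$. For this I would use the uniform absolute-continuity bound
\[
\int_A\frac{dx}{|x-y|}\,\leq\,\int_{B_R}\frac{dx}{|x|}\,=\,\frac{n\omega_n}{n-1}R^{n-1}\,=\,C_n\,|A|^{(n-1)/n},\qquad R=(|A|/\omega_n)^{1/n},
\]
obtained by translating $y$ to the origin and applying the Hardy--Littlewood rearrangement inequality (since $1/|x|$ is the symmetric decreasing rearrangement of $1/|x-y|$). Taking $A=F_k\Delta B_1$ yields $\sup_{y\in\R^n}|U_{F_k}(y)-U_{B_1}(y)|\leq C_n|F_k\Delta B_1|^{(n-1)/n}\to 0$.

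With this uniform convergence in hand the argument concludes quickly. Since $F_k\subset B_{R_0}$, for $|y|>R_0$ one has $U_{F_k}(y)\leq \omega_n/(|y|-R_0)$, while $U_{F_k}(y_k)\geq U_{F_k}(0)\to U_{B_1}(0)=\frac{n\omega_n}{n-1}>0$; hence $\{y_k\}$ is bounded, and a subsequence converges to some $y_\infty$ with $|y_\infty|\geq\eps_0$. Combining uniform convergence with the continuity of $U_{B_1}$,
\[
U_{B_1}(y_\infty)\,=\,\lim_k U_{F_k}(y_k)\,\geq\,\lim_k U_{F_k}(0)\,=\,U_{B_1}(0).
\]
The contradiction then comes from the strict maximality of $U_{B_1}$ at the origin, either via the explicit radial Newtonian potential formula (strictly decreasing in $|y|$) or via the strict form of Hardy--Littlewood, since equality $U_{B_1}(y)=U_{B_1}(0)$ would force $\chi_{B_1}$ to coincide with its translate by $y$. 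The only delicate point is the need for uniform control of $U_{F_k}$ over the region where the centers live, and this is exactly what the rearrangement estimate above provides; everything else is a routine compactness argument.
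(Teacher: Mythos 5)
Your proof is correct, and it follows the same broad contradiction-and-compactness scheme as the paper but implements both key steps differently. To pass to the limit in the potentials, the paper simply invokes the dominated convergence theorem, implicitly handling the simultaneous convergences $\chi_{F_k}\to\chi_{B_1}$ in $L^1$ and $y_k\to y_0$; you instead prove the clean uniform estimate $\sup_y|U_{F_k}(y)-U_{B_1}(y)|\le C_n|F_k\Delta B_1|^{(n-1)/n}$ via the Hardy--Littlewood rearrangement (bathtub) bound, which makes the limit passage explicit and also supplies the boundedness of the centers as a by-product. For the concluding contradiction, the paper applies the divergence theorem to turn $\int_{B_1}\frac{1}{|x|}\,dx\le\int_{B_1}\frac{1}{|x-y_0|}\,dx$ into $\int_{\partial B_1}1\,d\Ha^{n-1}\le\int_{\partial B_1}x\cdot\frac{x-y_0}{|x-y_0|}\,d\Ha^{n-1}$, which forces $y_0=0$ by Cauchy--Schwarz; you instead appeal to the strict radial monotonicity of the potential of the ball (or strict Hardy--Littlewood). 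Both endings are standard and equivalent in content. Your route is slightly longer but arguably more self-contained, and the uniform-convergence estimate is a nice device that the paper leaves implicit.
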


\begin{proof}
We argue by contradiction and assume that there exist $F_k \subset B_{R_0}$ such that  $F_k \to B_1$ in $L^1$ and $y_{F_k} \to y_0$ with $|y_0| \geq \eps$, for some $\eps >0$. Then we would have
\[
\int_{F_k} \, \frac{1}{|x|} \, dx \leq \int_{F_k} \, \frac{1}{|x - y_{F_k}|} \, dx.
\] 
Letting $k \to \infty$,  by the dominated convergence theorem the left hand side converges to $\int_{B_1} \, \frac{1}{|x|} \, dx$, while the right hand side converges to  $\int_{B_1} \, \frac{1}{|x - y_0|} \, dx$. Thus we have
\[
\int_{B_1} \, \frac{1}{|x|} \, dx \leq \int_{B_1} \, \frac{1}{|x - y_0|} \, dx.
\]
By the divergence theorem we conclude that
\[
\int_{\partial B_1} 1 \, dx \leq \int_{\partial B_1} \, x \cdot \frac{x- y_0}{|x - y_0|}  \, dx
\]
and this inequality may only hold if $y_0=0$, thus leading to a contradiction.
\end{proof}

The next lemma states that in order to prove \eqref{six} we may always assume that our set $E$ is contained a sufficiently large  ball $B_{R_0}$. The proof follows closely the one given in \cite[Lemma~5.1]{FMP} and we only indicate the few changes needed in our case.
\begin{lemma}
\label{large.ball}
There exist $R_0 =R_0(n)>1$ and $C =C(n)$ such that for every set $E$, with $|E| = \omega_n$, we may find $E' \subset B_{R_0}$ such that $|E'| = \omega_n$ and
\begin{equation}\label{largeball1}
 \asym(E)^2 \leq \asym(E')^2 + CD(E), \qquad D(E') \leq C D(E)\,.
\end{equation}
\end{lemma}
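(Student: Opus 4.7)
The argument follows the truncation scheme of \cite[Lemma~5.1]{FMP}, adapted to handle the normal-oscillation term in the asymmetry index $A(E)$. First, if $D(E) \geq \delta_0(n)$ for a small constant $\delta_0$, then one checks directly that $A(E) \leq C(n)$, so the choice $E' = B_1$ trivially satisfies both inequalities in \eqref{largeball1}. Henceforth I assume $D(E) \leq \delta_0$.

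After a suitable translation of $E$, set $v(r) := |E \setminus B_r|$. Combining the identity
$$P(E \cap B_r) + P(E \setminus B_r) = P(E) + 2\Ha^{n-1}(E \cap \partial B_r),$$
valid for a.e.\ $r$, with the sharp Euclidean isoperimetric inequality applied separately to $E \cap B_r$ and $E \setminus B_r$ yields a differential inequality in $v$. Integrating this inequality and averaging via the coarea formula produce $R_0 = R_0(n) > 1$ together with an intermediate radius $R \in (1, R_0)$ that simultaneously satisfies
$$v(R) \leq CD(E), \qquad P(E;\R^n\setminus B_R) \leq CD(E), \qquad \Ha^{n-1}(E \cap \partial B_R) \leq CD(E)^{(n-1)/n}.$$

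Define $E_1 := E \cap B_R$ and $E' := \lambda E_1$ with $\lambda := (\omega_n/|E_1|)^{1/n} = 1 + O(D(E))$, so that $|E'| = \omega_n$ and $E' \subset B_{R_0'}$ for a suitable $R_0'(n)$. The bound $D(E') \leq CD(E)$ follows from $P(E_1) \leq P(E) - P(E;\R^n\setminus B_R) + \Ha^{n-1}(E \cap \partial B_R)$ together with the dilation formula $P(E') = \lambda^{n-1}P(E_1)$ and $\lambda - 1 = O(D(E))$. For the asymmetry bound, pick $y \in \R^n$ optimal for $A(E')$ (it lies in a bounded set, since $E' \subset B_{R_0'}$ has fixed volume) and use it as a competitor in $A(E)$. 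The Fraenkel contribution satisfies $|E \Delta B_1(y)| \leq |E' \Delta B_1(y)| + |E \Delta E'| = |E' \Delta B_1(y)| + O(D(E))$. The normal-oscillation integral splits into a piece over $\partial^* E \setminus B_R$, bounded by $4 P(E;\R^n\setminus B_R) = O(D(E))$, and a piece over $\partial^* E \cap B_R$ which, after the change of variable $x \mapsto \lambda x$ and accounting for the extra boundary piece $\lambda(E \cap \partial B_R)$ of $\partial^* E'$, is within $O(D(E))$ of the corresponding integral over $\partial^* E'$. Assembling these estimates and using the a priori boundedness of $A(E')$ to absorb the cross-terms that arise when squaring $A(E) \leq A(E') + C\sqrt{D(E)}$ yields $A(E)^2 \leq A(E')^2 + CD(E)$.

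The main obstacle, absent from the treatment of the Fraenkel asymmetry in \cite{FMP}, is controlling the normal-vector oscillation under truncation and dilation. Crucially, the tail estimate $P(E;\R^n\setminus B_R) = O(D(E))$ must be \emph{linear} in $D(E)$ (rather than the merely $O(\sqrt{D(E)})$ bound that would suffice for the Fraenkel asymmetry), because the oscillation integral enters $A(E)^2$ linearly rather than quadratically; the coarea/isoperimetric step in the truncation is calibrated to yield exactly this linear bound.
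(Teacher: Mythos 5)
Your proposal takes a genuinely different route from the paper, and unfortunately it contains a gap that is hard to repair within the scheme you describe.

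The paper follows the slab truncation of \cite[Lemma~5.1]{FMP}: it intersects $E$ with a strip $\{\tau_1 < x_1 < \tau_2\}$, iterating over the $n$ coordinate directions. The decisive advantage of cutting by half-spaces is that, for a.e.\ $t$, $P(E\cap\{x_1<t\})\leq P(E)$ with no error term whatsoever, by the standard projection argument; so the new boundary created by the cut is \emph{completely absorbed} and never appears in the estimates. In addition, the paper works exclusively through the identity $\asym(E)^2 = P(E) - (n-1)\potent(E)$, where $\potent(E)=\max_y\int_E |x-y|^{-1}\,dx$. Since $\tilde E\subset E$ implies $\int_{\tilde E}|x-y|^{-1}dx\le\int_E|x-y|^{-1}dx$ for any fixed $y$, one gets $\asym(E)^2-\asym(\tilde E)^2\le P(E)-P(\tilde E)\le D(E)+(P(B_1)-P(\tilde E))\le C\,D(E)$ essentially for free. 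All subsequent steps are multiplicative scalings by $\lambda = 1+O(D(E))$, so linear control in $D(E)$ propagates cleanly.

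Your scheme is a de~Giorgi-type spherical truncation, and here the new boundary piece $E\cap\partial B_R$ genuinely appears in both the perimeter and the oscillation budget. The trouble is the estimate you ask of the selected radius $R$: you write $\Ha^{n-1}(E\cap\partial B_R)\le C\,D(E)^{(n-1)/n}$. With $E_1 := E\cap B_R$ one has, for a.e.\ $R$, $P(E_1)=P(E;B_R)+\Ha^{n-1}(E\cap\partial B_R)$, hence $P(E_1)-P(E)=\Ha^{n-1}(E\cap\partial B_R)-P(E;\R^n\setminus B_R)$. Plugging in your bounds and rescaling by $\lambda^{n-1}=1+O(D(E))$ yields only $D(E')\le C\,D(E)^{(n-1)/n}$, which for $n\geq2$ and $D(E)$ small is strictly weaker than the required $D(E')\le C\,D(E)$. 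You yourself note in the final paragraph that the estimates must be linear in $D(E)$; the $(n-1)/n$ power on the slice directly contradicts that requirement and makes the second inequality in \eqref{largeball1} fail in your scheme as stated. (One could try to upgrade the slice bound to $\Ha^{n-1}(E\cap\partial B_R)\le P(E;\R^n\setminus B_R)$ via the $1$-Lipschitz radial projection onto $\partial B_R$, which would reduce the problem to establishing $P(E;\R^n\setminus B_R)\le C\,D(E)$ for some $R$; but then the burden shifts to proving this linear tail bound, which is precisely what the slab truncation of \cite{FMP} delivers and your coarea sketch does not justify.) Independently, your direct manipulation of the oscillation integral under the dilation $x\mapsto\lambda x$ is substantially more delicate than you indicate; the paper sidesteps all of it via the identity $\asym^2 = P - (n-1)\potent$, which you would do well to adopt even if you insisted on the spherical cut.
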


\begin{proof}
Let us assume that $D(E) \leq \frac{1}{4}(2^{1 / n} -1) $. Otherwise, observing that $\beta(E)^2\leq P(E)= P(B_1)+D(E)$, \eqref{largeball1} (and in turn \eqref{six}) follows at once taking $E'=B_1$ and a sufficiently large constant $C(n)$. 
\par
Moreover, up to rotation, we may also assume
without loss of generality  that 
\[
\Ha^{n-1 }(\{ x \mid \nu_E(x) = \pm e_i  \}) = 0
\]
for any $i = 1, \dots, n$. 

Arguing exactly as in \cite{FMP}, we may find $\tau_1, \tau_2$ such that $0 < \tau_2 -\tau_1 < \rho_0$, for some $\rho_0$ depending only on $n$, such that the set $\tilde{E} = E \cap \{x \mid \tau_1 < x_1 < \tau_2  \}$ satisfies
\begin{equation}
\label{Etilde}
|\tilde{E}| \geq |B_1| \left( 1- 2 \, \frac{D(E)}{2^{1 / n} -1} \right) \qquad \text{and} \qquad P(\tilde{E}) \leq P(E).
\end{equation}
The latter inequality follows simply from the fact that we cut $E$ by a hyperplane. 

The first inequality in \eqref{Etilde} and the isoperimetric inequality yield
\[
P(\tilde{E}) \geq n\omega_n^{1/n} |\tilde{E}|^{\frac{n-1}{n}} \geq n\omega_n^{1/n} |B_1|^{\frac{n-1}{n}} \left( 1- 2 \frac{D(E)}{2^{1 / n} -1} \right)^{\frac{n-1}{n}} \geq  P(B_1) \left( 1- C \, D(E) \right).
\]
From this inequality, using \eqref{remember} and \eqref{potent} and denoting by $y_{\tilde E}$ the center of $\tilde E$, we get
\begin{equation}
\label{asym-estim}
\begin{split}
 \asym(E)^2 -\asym(\tilde{E})^2 &\leq (  P(E) - P(\tilde{E}) ) + \int_{\tilde{E}} \frac{n-1}{|x-y_{\tilde E}|} \,  dx - \int_{E} \frac{n-1}{|x-y_{\tilde E}|} \,  dx \\
&\leq  D(E) +  P(B_1)-P({\tilde E})
\leq C_1 D(E).
\end{split}
\end{equation}
Set  now
\[
\lambda = \left( \frac{|B_1|}{|\tilde{E}|} \right)^{1/n} \qquad \text{and} \qquad E' = \lambda \tilde{E}.
\]
From the first inequality in \eqref{Etilde} we get that $1 \leq  \lambda \leq 1 + C_2 D(E)$, while the second inequality   yields
\[
P(E') = \lambda^{n-1} P(\tilde{E}) \leq (1 + C_3 D(E)) P(E)
\]
and the second inequality in \eqref{largeball1} follows. On the other hand from \eqref{asym-estim} we get
\[
\asym(E')^2 = \lambda^{n-1} \asym(\tilde{E})^2 \geq \asym(\tilde{E})^2 \geq \asym(E)^2 - C_1 D(E)\,,
\]
that is  the first inequality in \eqref{largeball1}.
The proof is completed by repeating the same argument for all coordinate axes.
\end{proof}

We will also need the following corollary of the isoperimetric inequality. 
\begin{lemma}
\label{penal.ball}
Suppose that $R_0>1$ and $\Lambda >n$. Then, up to a translation, the unit ball $B_1$ is the unique minimizer of 
\[
P(F) + \Lambda \big| |F| - \omega_n \big| 
\] 
among all sets contained in $B_{R_0}$.
\end{lemma}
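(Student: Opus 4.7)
My plan is to reduce the minimization to a one-variable optimization via the sharp isoperimetric inequality and then analyze a simple scalar function of the radius. Given any admissible $F \subset B_{R_0}$, introduce $r \in [0, R_0]$ with $|F| = \omega_n r^n$. By the sharp isoperimetric inequality, $P(F) \geq n\omega_n r^{n-1}$, with equality if and only if $F$ coincides (up to a null set) with a ball of radius $r$. Hence the functional is bounded below by
\[
f(r) := n\omega_n r^{n-1} + \Lambda \omega_n |r^n - 1|,
\]
and it suffices to prove $f(r) \geq n\omega_n = P(B_1)$, with equality only at $r = 1$.

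For $r \geq 1$ I split as $f(r) = n\omega_n r^{n-1} + \Lambda\omega_n(r^n - 1)$; both summands are nondecreasing on $[1, \infty)$ with the second strictly increasing, so $f(r) > f(1) = n\omega_n$ for every $r > 1$. For $r \in [0, 1]$ I write $f(r) = n\omega_n r^{n-1} + \Lambda\omega_n(1 - r^n)$ and compute
\[
f'(r) = n\omega_n r^{n-2}\bigl((n-1) - \Lambda r\bigr).
\]
Since $\Lambda > n > n - 1$, the unique critical point in $(0,1)$ is $r_* = (n-1)/\Lambda$, and $f$ increases on $[0, r_*]$ from $f(0) = \Lambda\omega_n$ up to its maximum, then strictly decreases on $[r_*, 1]$ to $f(1) = n\omega_n$. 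Because $f(0) = \Lambda \omega_n > n \omega_n$ by the hypothesis $\Lambda > n$, the minimum of $f$ on $[0,1]$ is attained only at $r = 1$.

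Combining the two ranges gives $P(F) + \Lambda \bigl|\, |F| - \omega_n \bigr| \geq n\omega_n$, with equality forcing $r = 1$ together with equality in the isoperimetric inequality; the latter means that $F$ is (up to a null set) a ball of radius $1$, and since $R_0 > 1$ such balls indeed fit inside $B_{R_0}$. This yields existence and pins down uniqueness up to translation. The only delicate point in the argument is the strict inequality at $r = 0$, for which the hypothesis $\Lambda > n$ is used essentially (otherwise the empty set would become a competing minimizer); everything else is elementary calculus together with the equality case of the classical isoperimetric inequality.
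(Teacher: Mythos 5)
Your proof is correct and follows the same overall route as the paper's: use the sharp isoperimetric inequality to reduce to a scalar problem in the (rescaled) radius, then show that the scalar function is minimized only at $r=1$. The only real difference is in how the scalar part is dispatched. You handle $r\le 1$ and $r\ge 1$ separately and analyze $f(r)$ by elementary calculus, locating the interior critical point at $r_*=(n-1)/\Lambda$ and comparing endpoint values. The paper is slightly slicker: it first observes that any minimizer must satisfy $P(E)\le P(B_1)=n\omega_n$, hence $|E|\le\omega_n$ by the isoperimetric inequality, so only the branch $|F|\le\omega_n$ of the absolute value matters; it then bounds
\[
P(E)+\Lambda(\omega_n-|E|)-n\omega_n \;\ge\; n\omega_n^{1/n}|E|^{\frac{n-1}{n}}+\Lambda(\omega_n-|E|)-n\omega_n \;\ge\;(\Lambda-n)(\omega_n-|E|),
\]
the last step being the algebraic inequality $\omega_n^{1/n}|E|^{(n-1)/n}\ge |E|$ for $|E|\le\omega_n$. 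This forces $|E|=\omega_n$ when $\Lambda>n$, and equality in the isoperimetric step then identifies $E$ as a unit ball. So the paper avoids any derivative computation and never needs to consider $r>1$ explicitly; your version buys nothing beyond that, but it is complete and correct, and as you note it also makes transparent exactly where $\Lambda>n$ is needed (to rule out the degenerate competitor at $r=0$).
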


\begin{proof}
Suppose that $E$ is a minimizer of the functional  above . Then we have
\[
P(E) \leq P(E)+ \Lambda \big| |E| - \omega_n \big| \leq P(B_1) =  n \omega_n.
\]
Thus the isoperimetric inequality implies that $|E| \leq \omega_n$. Therefore, by the minimality of $E$ and the isoperimetric inequality again, we have
\[
\begin{split}
0 &\geq P(E) + \Lambda \big| |E| - \omega_n \big|- P(B_1) \\
&\geq n \omega_n^{1/n} \, |E|^{\frac{n-1}{n}} + \Lambda ( \omega_n - |E|) - n \omega_n 
\geq  (\Lambda - n) (\omega_n - |E|)  \,.
\end{split}
\]  
Hence, $E$ is a ball of radius one. 
\end{proof}

The following  lower semicontinuity lemma  will be used in the  proof of Theorem~\ref{main}. It deals with the functional 
\begin{equation} 
\label{functional}
\F(E) =  P(E) + \Lambda \big| |E| - \omega_n  \big| + \frac{1}{4}| \asym (E)^2 - \eps|\,.
\end{equation}

\begin{lemma}
\label{lowersemi}
The functional \eqref{functional} is lower semicontinuous with respect to the $L^1$-convergence in $B_{R_0}$.
\end{lemma}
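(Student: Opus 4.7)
The strategy is to combine the absolute-value term with $P(E)$ via the identity \eqref{remember}, namely $\asym(E)^2=P(E)-(n-1)\potent(E)$, and then rewrite the resulting expression as a maximum of two lower semicontinuous (LSC) quantities. The terms $P(E)$ and $\Lambda||E|-\omega_n|$ are handled by standard facts ($L^1$-LSC of $P$ and $L^1$-continuity of the volume). The difficulty lies in the absolute value: $\asym(E)^2$ contains $P(E)$ with a sign that changes according to whether $\asym(E)^2\gtrless\eps$, so one cannot apply LSC of $P$ to that piece directly.

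Using $|a|=\max\{a,-a\}$ together with \eqref{remember}, one rewrites
\[
P(E)+\tfrac{1}{4}|\asym(E)^2-\eps| \;=\; \max\!\Bigl\{\tfrac{5}{4}P(E)-\tfrac{n-1}{4}\potent(E)-\tfrac{\eps}{4},\ \tfrac{3}{4}P(E)+\tfrac{n-1}{4}\potent(E)+\tfrac{\eps}{4}\Bigr\}.
\]
Since the maximum of two LSC functions is LSC, it suffices to show that $\potent$ is \emph{continuous} under $L^1$-convergence of sets contained in $B_{R_0}$: then $\pm\potent$ are both upper semicontinuous, and each branch, being a positive multiple of $P$ plus a continuous quantity, is LSC.

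To prove continuity of $\potent$, let $E_k\to E$ in $L^1$ with $E_k, E\subset B_{R_0}$ and $|E|>0$ (the case $|E|=0$ being trivial by a rearrangement estimate). For each $k$ select a maximizer $y_k$ in \eqref{potent}, which exists because the Riesz potential $y\mapsto\int_{E_k}|x-y|^{-1}\,dx$ is continuous in $y$ and tends to $0$ as $|y|\to\infty$. The uniform bound $\sup_k|y_k|<\infty$ follows by contradiction: were $|y_k|\to\infty$ along a subsequence, then $\potent(E_k)\to 0$, contradicting $\potent(E_k)\geq\int_{E_k}|x|^{-1}\,dx\to\int_E|x|^{-1}\,dx>0$. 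Extracting a limit $y_k\to y_0$, the triangle inequality
\[
\Bigl|\int_{E_k}\tfrac{dx}{|x-y_k|}-\int_E\tfrac{dx}{|x-y_0|}\Bigr|\leq\int|\chi_{E_k}-\chi_E|\,\tfrac{dx}{|x-y_k|}+\int_E\Bigl|\tfrac{1}{|x-y_k|}-\tfrac{1}{|x-y_0|}\Bigr|\,dx
\]
then yields convergence: the first integral vanishes by H\"older's inequality, using that $|x-y|^{-1}\in L^p_{loc}$ for $p<n$ uniformly in $y$ on compact sets, together with $\|\chi_{E_k}-\chi_E\|_{L^{p'}}^{p'}=|E_k\Delta E|\to 0$; the second vanishes by dominated convergence. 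This gives $\limsup_k\potent(E_k)\leq\potent(E)$, while the matching lower bound $\liminf_k\potent(E_k)\geq\potent(E)$ is obtained by testing with any fixed maximizer $y^*$ for $\potent(E)$.

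The main obstacle is precisely this continuity of the Riesz-type potential $\potent$ under joint variation of the set and the base point, where the mild singularity of $|x-y|^{-1}$ has to be handled via H\"older/dominated-convergence. Beyond this, everything is either elementary algebra (the max rewriting) or standard ($L^1$-LSC of $P$, $L^1$-continuity of volume). A key structural point is that the coefficient $\tfrac14$ in front of the absolute value is strictly less than $1$: this ensures both branches of the maximum retain a strictly positive multiple of $P(E)$, so that the LSC of $P$ survives the combination with the potentially wrong-signed contribution coming from $\asym(E)^2$.
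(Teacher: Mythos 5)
Your proof is correct, and it takes a genuinely different route to the final step even though it rests on the same core lemma. Both you and the paper reduce the problem to the $L^1$-continuity of the potential $\potent$, and both exploit \eqref{remember} so that the wrong-signed occurrence of $P(E)$ inside the absolute value can be absorbed. Where you differ is in how that absorption is performed. The paper fixes a subsequence with $P(E_k)\to\alpha\geq P(E)$, expands $\F(E_k)$, and applies the reverse triangle inequality to obtain $\lim\F(E_k)\geq\F(E)+(\alpha-P(E))-\tfrac14|\alpha-P(E)|\geq\F(E)$; you instead rewrite $P(E)+\tfrac14|\asym(E)^2-\eps|$ as a maximum of two affine combinations of $P(E)$ and $\potent(E)$ with positive perimeter coefficients $\tfrac54$ and $\tfrac34$, and invoke that a max of LSC functions is LSC. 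These are two faces of the same algebra: in both cases, the condition making everything work is that the coefficient $\tfrac14<1$, which you correctly flag. Your max-decomposition is arguably cleaner conceptually, as it makes the structural role of the coefficient manifest without any subsequence extraction.

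On the continuity of $\potent$, you again reach the same conclusion by a slightly different estimate. For the upper bound, the paper splits $\int_{E_k}|x-y_k|^{-1}\,dx$ over $E$ and $E_k\setminus E$, bounds the latter by a Schwarz-type rearrangement $\int_{B_{r_k}(y_k)}|x-y_k|^{-1}\,dx=\tfrac{1}{n-1}P(B_{r_k})$ with $|B_{r_k}|=|E_k\setminus E|$, and lets $r_k\to0$. You instead extract a convergent subsequence of maximizers $y_k\to y_0$ (justifying boundedness as you do) and estimate the difference via H\"older's inequality together with the uniform local $L^p$-bound on $|x-y|^{-1}$ for $p<n$. One small imprecision: for your second error term $\int_E\bigl||x-y_k|^{-1}-|x-y_0|^{-1}\bigr|\,dx$, ``dominated convergence'' needs a little care near $y_0$; it is cleaner to observe that this is bounded by $\bigl\||\cdot-y_k|^{-1}-|\cdot-y_0|^{-1}\bigr\|_{L^1(B_{2R_0})}\to0$ by continuity of translation in $L^1$. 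This is cosmetic, not a gap. Your treatment is also slightly more careful than the paper in that you address existence of the maximizer $y_k$ and the degenerate case $|E|=0$, which the paper leaves implicit.
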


\begin{proof}Let us first prove that the functional $\potent$ defined in \eqref{potent}  is continuous with respect to $L^1$ convergence in $B_{R_0}$, that is 
\begin{equation}
\label{potent.cont}
\lim_{k\to \infty} \potent (E_k) = \potent(E)\,,
\end{equation}
whenever  $E_k,E\subset B_{R_0}$ and $E_k \to E$ in $L^1$. To this aim, suppose that the sets $E_k$ and $E$ are centered at $y_k$ and at $y_0$, respectively. From  the definition of $\potent$ we obtain
\[
\int_{E_k} \frac{1}{|x-y_0|} \, dx \leq \potent(E_k) 
\]
and therefore $\lim_{k\to \infty} \potent (E_k) \geq \potent(E)\,.$ On the other hand, choose $r_k$ such that $|B_{r_k}| = |E_k \setminus E|$ and use the divergence theorem to obtain 
\[
\begin{split}
\potent(E_k) &\leq \int_{E} \frac{1}{|x-y_k|} \, dx + \int_{ E_k \setminus E} \frac{1}{|x-y_k|} \, dx \leq \potent(E) +  \int_{B_{r_k}(y_k)} \frac{1}{|x-y_k|} \, dx\\&= \potent(E) +  \frac{1}{n-1} P(B_{r_k}) .
\end{split}
\]
Therefore $\lim_{k\to \infty} \potent (E_k) \leq \potent(E)$  and \eqref{potent.cont} follows.
 
To show the lower semicontinuity of $\F$, let us consider   $E_k,E\subset B_{R_0}$, with $E_k \to E$ in $L^1$. Without loss of generality, we may assume that 
\[
\lim \inf_{k\to \infty} \F(E_k) = \lim_{k\to \infty} \F(E_k)<\infty\,.  
\]
Passing possibly to a subsequence we may also assume that   $\lim_{k\to \infty} P(E_k) = \alpha$. By the lower semicontinuity of the perimeter we have
\[
\alpha \geq P(E)\,.
\]
Then, recalling \eqref{remember}, we get
\[
\begin{split}
\lim_{k\to \infty} \F(E_k) &= \alpha + \Lambda  \big| |E| - \omega_n  \big| + \frac{1}{4}| \alpha  - (n-1) \gamma(E) - \eps| \\
&\geq \F(E) + (\alpha - P(E)) - \frac{1}{4}|\alpha - P(E)|  \geq \F(E)\,,
\end{split}
\]
thus concluding  the proof.
\end{proof}

We are now ready to prove the main result.

\begin{proof}[\textbf{Proof of  Theorem \ref{mainthm}}]
Let $c_0>0 $ be a constant which will be chosen at the end of the proof. Thanks to Lemma~ \ref{large.ball} and Proposition~\ref{vesaprop}  it is sufficient to prove that there exists $\delta_0>0$ such that, if $D(E) \leq \delta_0$ then 
$$
D(E)\geq c_0 \asym(E)^2
$$
for all $ E \subset B_{R_0},   |E| = \omega_n.$

We argue by contradiction and assume that there exists a sequence of sets $E_k \subset B_{R_0}$ such that $|E_k| = \omega_n$, $P (E_k) \to P(B_1)$ and
\begin{equation}
\label{contradict}
P(E_k) < P(B_1) + c_0 \, \asym (E_k)^2.
\end{equation}
By compactness we have that, up to a subsequence, $ E_k \to E_{\infty}$ in $L^1$ and by the lower semicontinuity of the perimeter we immediately conclude that $ E_{\infty}$ is a ball of radius one. Set $\eps_k := \asym (E_k)$. 
In the proof of the Lemma \ref{lowersemi} it was shown that the functional $\gamma$ defined in \eqref{potent} is continuous with respect to $L^1$ convergence. Therefore, since  $E_k$ is converging to a ball of radius one in $L^1$ and $P (E_k) \to P(B_1)$, we have that  
\[
\eps_k = P(E_k) -(n-1)\, \potent(E_k) \to 0 
\]
As in \cite{AFM} we replace each set $E_k$ by a minimizer $F_k$ of the following problem 
\begin{equation}
\label{contr-funct}
\min \, \{ P(F) + \Lambda \big| |F| - \omega_n  \big| + \frac{1}{4}| \asym (F)^2 - \eps_k^2|, \quad  F \subset B_{R_0}\}
\end{equation}
for some fixed $\Lambda > n$. By  Lemma~ \ref{lowersemi} we know that the functional above is lower semicontinuous with respect to $L^1$-convergence of sets and therefore a minimizer exists.

\noindent\textbf{Step 1:} \, Up to a subsequence, we may assume that $F_k \to F_{\infty}$ in $L^1$. Since $F_k$ minimizes \eqref{contr-funct} we have from \eqref{contradict} and Lemma \ref{penal.ball} that
\[
\begin{split}
P(F_k) + \Lambda \big| |F_k| - \omega_n  \big|+  \frac{1}{4}| \asym (F_k)^2 - \eps_k^2| &\leq P(E_k) < P(B_1) +  c_0 \, \eps_k^2 \\
&\leq P(F_k) + \Lambda \big| |F_k| - \omega_n  \big| +  c_0 \, \eps_k^2\,.
\end{split}
\]
Hence $| \asym (F_k)^2 - \eps_k^2| \leq 4 c_0 \, \eps_k^2$, which implies $\asym (F_k) \to 0$ and
\begin{equation}
\label{good.obs}
\eps_k^2 \leq \frac{1}{1- 4c_0} \asym (F_k)^2\,.
\end{equation}
Therefore $F_{\infty}$ is a  minimizer of the problem  
\[
\min \{ P(F)  + \Lambda \big| |F| - \omega_n  \big|: \,\,  F \subset B_{R_0}\}\,.
\]
Thus by the Lemma \ref{large.ball} we conclude that  $F_{\infty}$ is a ball $B_1(x_0)$ for some $x_0$.

\noindent\textbf{Step 2:} \,We claim that for any $\eps>0$,  $B_{1- \eps}(x_0) \subset F_k \subset B_{1+\eps}(x_0) $ for $k$ large enough.

To this aim we show that the sets $F_k$ are area $(K, r_0)$-quasiminimizers  and use Theorem~\ref{david.semmes}. Let $G\subset\R^n$ be such that  $G \Delta F_k \Subset  B_r(x)$, $r \leq r_0$. 

\textit{Case 1:}  Suppose  that $B_r(x) \subset B_{R_0}$. By the minimality of $F_k$ we obtain
\begin{equation}
\label{ach}
P (F_k) \leq P(G ) + \frac{1}{4}| \asym(F_k)^2 - \asym(G)^2| + \Lambda \big| |F_k|  - |G| \big| 
\end{equation}
Assume that $\beta(F_k)\geq\beta(G)$ (otherwise the argument is similar) and denote by $y_G$ the center of $G$. Then we get
\[
\begin{split}
&| \asym(F_k)^2 - \asym(G)^2| \leq \int_{\partial^*F_k}\!\left( 1- \nu_{F_k} \cdot \frac{z-y_G}{|z-y_G|} \right)  d \Ha^{n-1}(z)-\int_{\partial^* G}\! \left( 1- \nu_{G} \cdot \frac{z-y_G}{|z-y_G|} \right) d \Ha^{n-1}(z) \\
&\qquad\quad=  \int_{\partial^* F_k\cap B_r(x)}\! \left( 1- \nu_{F_k} \cdot \frac{z-y_G}{|z-y_G|} \right)  d \Ha^{n-1}(z)-\int_{\partial^* G\cap B_r(x)}\! \left( 1- \nu_{G} \cdot \frac{z-y_G}{|z-y_G|} \right)  d \Ha^{n-1}(z) \\
&\qquad\quad\leq 2\bigl[P(F_k ;  B_r(x)) + P(G ;  B_r(x)) ]\,,
\end{split}
\]
where $P(E;  B_r(x))$ stands for the perimeter of $E$ in  $B_r(x)$.  Therefore, from \eqref{ach} we get 
$$
P(F_k ;  B_r(x)) \leq 3P(G ;  B_r(x))+2\Lambda \big| |F_k|  - |G| \big| \,.
$$
From the above inequality the  $(K, r_0)$-quasiminimality immediately follows by observing that  
\[
 |F_k \Delta G| \leq \omega_n^{1/n} r^{1/n} |F_k \Delta G|^{\frac{n-1}{n}} \leq C(n) r^{1/n} [  P(F_k ;  B_r(x)) + P(G ;  B_r(x)) ]
\]
and choosing $r_0 $ sufficiently small.

\textit{Case 2:} If $ |B_r(x) \setminus B_{R_0} | >0$, we may write
\[
\begin{split}
&P(F_k ; B_r(x)) - P(G ; B_r(x)) \\
&=  P(F_k ; B_r(x)) -  P(G \cap B_{R_0} ; B_r(x)) + P(G \cap B_{R_0} ; B_r(x)) - P(G ; B_r(x)) \\
&=  P(F_k; B_r(x)) -  P(G \cap B_{R_0} ; B_r(x))  + P (B_{R_0}) -  P(G \cup B_{R_0}) \\
&\leq P(F_k; B_r(x)) -  P(G \cap B_{R_0} ; B_r(x)).
\end{split}
\]
From Case 1 we have that this term is less than $(K-1) P(G  \cap B_{R_0}; B_r(x))$ which in turn is smaller than  $(K-1) P(G; B_r(x))$.  Hence, all $F_k$ are  $(K, r_0)$-quasiminimizer with  uniform constants $K$ and $r_0$. 

The claim then follows from the theory of $(K, r_0)$-quasiminimizers and the fact that $F_k \to B_1(x_0)$ in $L^1$. Indeed, arguing by contradiction, assume that there exists $0<\eps_0<2r_0$ such that for infinitely many  $k$  one can find $x_k \in \partial F_k $ for which
\[
x_k \notin B_{1+ \eps_0}(x_0) \setminus B_{1- \eps_0}(x_0) . 
\] 
Let us assume that $x_k \in B_{1- \eps_0}(x_0)$ for infinitely many $k$ (otherwise, the  argument  is similar). From Theorem \ref{david} it follows that there exist $y_k \in B_{\frac{\eps_0}{2}}(x_k)$ such that $B_{\frac{\eps_0}{2C}}(y_k) \subset  B_1(x_0) \setminus F_k$. This implies 
\[
|B_1(x_0) \setminus F_k| \geq |B_{\frac{\eps_0}{2C}}| >0, 
\]
which contradicts the fact that $F_k \to B_1(x_0)$ in $L^1$, thus proving the claim.

\noindent\textbf{Step 3:} 
Let us now translate  $F_k$, for $k$ large, so that the resulting sets, still denoted by $F_k$, are contained in $B_{R_0}$, have their barycenters at the origin and converge to $B_1$.
We are going to use Theorem~\ref{areamin}  to show that $F_k$ are $C^{1,1/2}$ and converge to $B_1$ in $C^{1, \alpha}$ for all $\alpha<1/2$. To this aim,  fix a small $\eps>0$. From Step 2 we have that for $k$ large 
\begin{equation}\label{incl}
B_{1-\eps}\subset F_k\subset B_{1+\eps}\,.
\end{equation}
We want to show that when $k$ is large $F_k$ is a $(\Lambda',r_0)$-almost minimizer for some constants $\Lambda',r_0$ to be chosen independently of $k$.

To this aim,
fix  a set  $G\subset\R^n$ such that $G \Delta F_k \Subset B_r(y) $, with $r<r_0$. 

 If $B_r(y)  \subset B_{1-  \eps}$, from \eqref{incl} it follows that $G \Delta F_k  \Subset F_k$ for $k$ large enough. This immediately yields $P(F_k) \leq P(G)$.
 
If  $B_r(y) \not \subset B_{1-  \eps}$, choosing $r_0$ and $\eps$ sufficiently small we have that
\begin{equation}\label{empty}
B_r(y)\cap B_{1/2}=\emptyset\,.
\end{equation}  
Denote by $y_{F_k}$ and  $y_G$ the centers of $F_k$ and $G$, respectively.   If $\eps$ is sufficiently small, from \eqref{incl} and  Lemma \ref{centerpoint} we have that for $k$ large
\begin{equation}
\label{not-far}
|y_{F_k} | \leq \frac14 \quad \text{and} \quad |y_G | \leq \frac14\,.
\end{equation}
By the minimality of $F_k$ we have
\[
P(F_k) \leq P(G) + \frac{1}{4}| P(F_k) - P(G)|   + \Lambda  \big| |F_k| - |G|  \big| + \frac{n-1}{4} |\potent(F_k) - \potent(G) |,
\]
which immediately implies
\begin{equation}
\label{compare}
P(F_k) \leq P(G)    +  2 \Lambda  | F_k \Delta G|+  (n-1)|\potent(F_k) - \potent(G) |.
\end{equation}
We may estimate the last term simply by
\[
\potent(F_k) - \potent(G) \leq \int_{F_k} \frac{1}{|x-y_{F_k}|} \, dx -\int_{G} \frac{1}{|x-y_{F_k}|} \, dx \leq \int_{F_k \Delta G} \frac{1}{|x-y_{F_k}|} \, dx
\]
and 
\[
\potent(G) - \potent(F_k) \leq \int_{G} \frac{1}{|x-y_G|} \, dx -\int_{F} \frac{1}{|x-y_G|} \, dx \leq \int_{F_k \Delta G} \frac{1}{|x-y_G|} \, dx.
\]
Therefore, recalling \eqref{empty} and  \eqref{not-far},  we have
$$
|\potent(F_k) - \potent(G) | \leq  4|F_k \Delta G|\,.
$$
From this estimate and inequality \eqref{compare} we may then conclude that
\[
P(F_k) \leq  P(G) + ( 2\Lambda + 4(n-1)) \, |F_k \Delta G| \leq  P(G) + \Lambda' \, r^n.
\]
Hence, the sets $F_k$ are  $(\Lambda', r_0)$- almost minimizers with uniform constants $\Lambda'$ and $ r_0$.

Thus, Theorem \ref{areamin}  yields that the $F_k$ are $C^{1, 1/2}$ and that, for $k$ large, \begin{equation}
\label{C^1-conver}
\partial F_k = \{ (1 + u_k(z))z:\,\, z \in \partial B_1\}
\end{equation}
for some $u_k\in C^{1,1/2}(\partial B_1)$ such that $u_k \to 0$ in $C^{1}(\partial B_1)$. 

\smallskip

\noindent\textbf{Step 4:}
By the minimality of $F_k$, \eqref{contradict} and  \eqref{good.obs} we have
\begin{equation}
\label{almost.there}
P(F_k) + \Lambda \big|  |F_k| - \omega_n \big| \leq P(E_k) <   P(B_1) + c_0 \eps_k ^2\leq P(B_1) + \frac{c_0}{1 -4c_0} \asym(F_k)^2\,.
\end{equation}
We are almost in a position to use Theorem \ref{Fuglede} to obtain a contradiction. We only need to rescale $F_k$ so that  the volume constrain is satisfied. Thus, set $F_k' := \lambda_k F_k$, where  $\lambda_k$  is such that $\lambda_k ^n|F_k| = \omega_n $. Then $\lambda_k \to 1 $ and also the sets $F_k' $ converge to $B_1$ in $C^1$  and have their barycenters at the origin. Therefore, since $\Lambda>n$, $P(F_k)\to n\omega_n$ and $|F_k|\to\omega_n$, we have that for $k$ sufficiently large
\begin{equation}
\label{scaling}
| P(F_k') - P(F_k) | = |\lambda_k^{n-1} - 1| \, P(F_k) \leq  \Lambda \,  |\lambda_k^n - 1 | \, |F_k|=  \Lambda \, \big| |F_k'| - |F_k|  \big|.
\end{equation}
Then  \eqref{almost.there} and \eqref{scaling} yield
\[
\begin{split}
P(F_k') &\leq P(F_k) + \Lambda  \big|  |F_k| - \omega_n \big| <  P(B_1) + \frac{c_0}{1 -4c_0} \asym(F_k)^2 \\
&=  P(B_1) + \frac{c_0 \, \lambda_k^{1-n}}{1 -4c_0} \asym(F_k')^2.
\end{split}
\]
which  contradicts \eqref{fuglede1} if  $2c_0/(1-4c_0)<1/C_0$ and $k$ is large.
\end{proof}

We conclude by proving that the oscillation index $\asym(E)$ defined in \eqref{defbeta} controls the total asymmetry $\estimate(E)$. 

\begin{proof}[\textbf{Proof of  Proposition~\ref{vesaprop}}]
Let $E$ be a set of finite perimeter such that $|E| = \omega_n$ and assume that
 $E$ is centered at the origin, i.e.,
\[
\asym(E)^2 =  \int_{\partial^* E} \left( 1- \nu_{E} \cdot \frac{x}{|x|} \right) \, d \Ha^{n-1}.
\]
By the divergence theorem we may write
\[
\begin{split}
 \int_{\partial^* E}  \nu_{E} \cdot \frac{x}{|x|} \, d \Ha^{n-1} - P(B_1) &=  \int_{ E} \frac{n-1}{|x|} \, dx -  \int_{ B_1} \frac{n-1}{|x|} \, dx\\
&= \int_{ E \setminus B_1} \frac{n-1}{|x|} \, dx - \int_{ B_1 \setminus E} \frac{n-1}{|x|} \, dx\,.
\end{split}
\]
This yields the equality
\begin{equation}
\label{asym.equlity}
\asym(E)^2  = D(E) -  \int_{ E \setminus B_1} \frac{n-1}{|x|} \, dx + \int_{ B_1 \setminus E} \frac{n-1}{|x|} \, dx.
\end{equation}
Let us estimate the last two terms in \eqref{asym.equlity}. Since $|E| = |B_1|$ we have
\begin{equation}
\label{definition.a}
|E \setminus B_1| =  |B_1 \setminus E| =:a.
\end{equation}
Denote by $A(R,1) = B_R \setminus B_1$ and $A(1,r) = B_1 \setminus B_r$  two annuli such that $|A(R,1)| = |A(1, r)| =a$, where $a$ is defined in \eqref{definition.a}. In other words
\[
R = \left( 1 + \frac{a}{\omega_n}\right)^{1/n} \qquad \text{and} \qquad r = \left( 1 - \frac{a}{\omega_n}\right)^{1/n}.
\]   
By construction $|A(R,1)| = |E \setminus B_1|$. Hence, we have that 
\[
\int_{ E \setminus B_1} \frac{n-1}{|x|} \, dx \leq \int_{ A(R,1)} \frac{n-1}{|x|} \, dx\,,
\]
since the weight $\frac{1}{|x|}$ gets smaller the further the set is from the unit sphere. Similarly, we have 
\[
 \int_{ B_1 \setminus E} \frac{n-1}{|x|} \, dx \geq  \int_{A(1, r)} \frac{n-1}{|x|} \, dx.
\]
Therefore we may estimate \eqref{asym.equlity} by
\begin{equation}
\label{long.calculation}
\begin{split}
\asym(E)^2  &\geq D(E) - \int_{ A(R,1)} \frac{n-1}{|x|} \, dx + \int_{ A(1, r)} \frac{n-1}{|x|} \, dx \\
&= D(E)- n\bigl[\omega_n (R^{n-1} -1) - \omega_n(1 - r^{n-1})\bigr] \\
&= D(E)  + n\omega_n \left(2 -   \left( 1 + \frac{a}{\omega_n}\right)^{\frac{n-1}{n}} -   \left( 1 - \frac{a}{\omega_n}\right)^{\frac{n-1}{n}}  \right).
\end{split}
\end{equation}

The function $f(t) = (1 + t)^{\frac{n-1}{n}}$ is uniformly concave in $[-1,1]$, i.e., 
\[
\frac{1}{2}\left(f(t) + f(s) \right) \leq f \left(\frac{t}{2} + \frac{s}{2} \right) - c_n |t-s|^2
\]
for $c_n = - \frac{1}{4} \left( \sup_{t \in (-1,1)} f''(t)\right)  = \left( \frac{1}{4n} \cdot \frac{n-1}{n} \right) 2^{\frac{-n-1}{n}} >0$. Therefore, recalling \eqref{definition.a},  we may estimate \eqref{long.calculation} by
\[
\asym(E)^2  \geq D(E) + \frac{8nc_n}{\omega_n} \, a^2 =  D(E) + {\tilde c}_n \, (|E \setminus B_1| +  |B_1 \setminus E|)^2.
\]
Since $|E \setminus B_1| +  |B_1 \setminus E| = |E \Delta B_1| $, we get 
\[
\begin{split}
(1+ {\tilde c}_n) \asym(E)^2  &\geq D(E) +{\tilde c}_n \, \left(  \int_{\partial^* E} \left( 1- \nu_{E} \cdot \frac{x}{|x|} \right) \, d \Ha^{n-1} +  |E \Delta B_1| ^2 \right) \\
&\geq D(E) + c \, \estimate(E)^2\,.
\end{split}
\]
Hence, the assertion follows. 
\end{proof}
\section*{Acknowledgment}
\noindent This research was supported by the 2008 ERC Advanced Grant 226234 ``Analytic Techniques for
Geometric and Functional Inequalities''.

\end{document}